\def\@seccntDot{.}
\def\@seccntformat#1{\csname the#1\endcsname\@seccntDot\hskip 0.5em}
\renewcommand\section{\@startsection{section}{1}{\z@}%
{18\p@ \@plus 6\p@ \@minus 3\p@}%
{9\p@ \@plus 6\p@ \@minus 3\p@}%
{\large\bfseries\boldmath}}
\renewcommand\subsection{\@startsection{subsection}{2}{\z@}%
{12\p@ \@plus 6\p@ \@minus 3\p@}%
{3\p@ \@plus 6\p@ \@minus 3\p@}%
{\bfseries\boldmath}}
\renewcommand\subsubsection{\@startsection{subsubsection}{3}{\z@}%
{12\p@ \@plus 6\p@ \@minus 3\p@}%
{\p@}%
{\bfseries\boldmath}}
\theoremstyle{plain}
\newtheorem{theorem}{Theorem}[section]
\newtheorem{lemma}{Lemma}[section]
\newtheorem{corollary}{Corollary}[section]
\theoremstyle{definition}
\numberwithin{equation}{section}
\title{Characterizations of the graphs with dominating parameters}
  \author{Yuhan Ma\footnote{ Email: kid4068@163.com.}
}
\affil{{\footnotesize School of Mathematical Sciences, East China Normal University, Shanghai, 200241, China}}
\date{}
\begin{document}
\maketitle
\begin{abstract}
	A subset $S$ of vertices of $G$ is a \textit{dominating set} of $G$ if every vertex in $V(G)-S$ has a neighbor in $S$. The \textit{domination number} \(\gamma(G)\) is the minimum cardinality of a dominating set of $G$. A dominating set $S$ is a \textit{total dominating set} if $N(S)$=$V$ where $N(S)$ is the neighbor of $S$. The \textit{total domination number} \(\gamma_t(G)\) equals the minimum cardinality of a total dominating set of $G$. A set $D$ is an \textit{isolate set} if the induced subgragh $G[D]$ has at least one isolated vertex. The \textit{isolate number} \(i_0(G)\) is the minimum cardinality of a maximal isolate set. In this paper we study these parameters and answer open problems proposed by Hamid et al. in 2016.   

\par\vspace{2mm}

\noindent{\bfseries 2020 Mathematics Subject Classification:} 
05C38, 05C35, 05C40
\par\vspace{2mm}
\noindent{\bfseries Keywords:} Domination; Isolate; Total domination
\end{abstract}

\section{Introduction}
By a graph $G=(V,E)$, we mean a finite, non-trivial, undirected graph with neither loops nor multiple edges. For graph theoretic terminology we refer to the book by Haynes, Hedetniemi and Slater \cite{TSP}.

Let $G$ be a graph with vertex set V of order $|V|= n$ and size $|E|= m$, and let \(\upsilon\) be a vertex in $V$. The \textit{open neighborhood} of \(\upsilon\) is \textit{\(N_G\)}(\(\upsilon\))= \{\(u\in V\mid uv\in E(G)\)\} and the \textit{closed neighborhood} of \(\upsilon\) is \textit{\(N_G\)}[\(\upsilon\)]= \{\(\upsilon\)\} \(\cup\) \textit{\(N_G\)}(\(\upsilon\)). The degree of \(\upsilon\) is $deg_G\)(\(\upsilon\))= \( |\textit{\(N_G\)}(\upsilon)|$. If the graph G is clear from the context, we simply write \textit{N}(\(\upsilon\)) and deg(\(\upsilon\)). For a set $S$\(\subseteq\)$V$, the \textit{open neighborhood} of $S$ is the set $N(S)$= \(\cup_{\upsilon\in S}\)\textit{N}(\(\upsilon\)), and the \textit{closed neighborhood} of $S$ is the set $N[S]$= $N(S)$ \(\cup\) $S$. A vertex of degree one is called a \textit{leaf} and its unique neighbor is called a \textit{support vertex}. For a subset $S$ of vertices of $G$, we denote by $G[S]$ the subgraph of $G$ \textit{induced} by $S$. The \textit{diameter} of a graph $G$, denoted by \textit{diam}($G$), is the maximum distance between pairs of vertices of $G$.

A vertex $v$ is called a \textit{dominating vertex} if $v$ is adjacent to every other vertex in $G$. A set \( D \) of vertices of a graph \( G \) is said to be a \textit{dominating set} if every vertex in \( V\setminus D \) is adjacent to a vertex in \( D \). A dominating set \( D \) is said to be a \textit{minimal dominating set} if no proper subset of \( D \) is a dominating set. The minimum cardinality of a minimal dominating set of a graph \( G \) is called the \textit{domination number} of \( G \) and is denoted by \( \gamma(G) \). The \textit{upper domination number} \( \Gamma(G) \) is the maximum cardinality of a minimal dominating set of \( G \). The minimum cardinality of an independent dominating set is called the \textit{independent domination number}, denoted by \( i(G) \) and the \textit{independence number} \( \beta_0(G) \) is the maximum cardinality of an independent set of \( G \). A set \( S \) is a \textit{total dominating set} if \( N(S) = V \). The \textit{total domination number} \( \gamma_t(G) \) equals the minimum cardinality of a total dominating set of \( G \). A set \( D \subseteq V(G) \) which is a dominating set of both \( G \) and \( \overline{G} \) is called a \textit{global dominating set}. The minimum cardinality of a global dominating is called the \textit{global domination number} and is denoted by \( \gamma_g(G) \). A set \( S \) of vertices is \textit{irredundant} if every vertex \( v \in S \) has at least one private neighbor. The minimum and maximum cardinality of a maximal irredundant set are respectively called the \textit{irredundance number} \( ir(G) \) and the \textit{upper irredundance number} \( IR(G) \).

A vertex $v$ is called an \textit{isolated vertex} if $deg(v)=0$. A set \( S \) of vertices of a graph \( G \) such that $G[S]$ has an isolated vertex is called an \textit{isolate set} of \( G \). The minimum and maximum cardinality of a maximal isolate set are called the \textit{isolate number} \( i_0(G) \) and the \textit{upper isolate number} \( I_0(G) \). An isolate set that is also a dominating set (an irredundant set) is an \textit{isolate dominating set} (an \textit{isolate irredundant set}). The \textit{isolate domination number} \( \gamma_0(G) \) and the \textit{upper isolate domination number} \( \Gamma_0(G) \) are respectively the minimum and maximum cardinality of a minimal isolate dominating set while the \textit{isolate irredundance number} \( ir_0(G) \) and the \textit{upper isolate irredundance number} \( IR_0(G) \) are the minimum and maximum cardinality of a maximal isolate irredundant set of \( G \). An isolate set \( S \) of \( G \) with \( |S| = i_0(G) \) is called an \( i_0 \)-set of \( G \). Similarly, $\gamma_t$-set, \( \gamma_0 \)-set, \( \Gamma_0 \)-set, \( ir_0 \)-set are defined. If a vertex is adjacent to all other vertices, then we call it a \textit{dominating vertex}. 

Up to now, many scholars have made excellent conclusions on the parameters of various dominating sets and their variants:
\begin{theorem}\cite{BE}
    For any graph $G$, $\frac{\gamma(G)}{2}\leq ir(G)\leq \gamma(G)\leq 2ir(G)-1$.
\end{theorem}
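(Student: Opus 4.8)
The inequalities here split into an easy half and a hard half, and the left bound will fall out of the right one, so the plan is to concentrate almost all effort on $\gamma(G)\le 2ir(G)-1$.

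\textbf{The easy inequality $ir(G)\le\gamma(G)$.} First I would show that every minimal dominating set is a maximal irredundant set; the bound then follows by comparing minimum cardinalities. I would recall the standard fact that a dominating set $D$ is minimal if and only if every $v\in D$ has a private neighbour, which is exactly the statement that $D$ is irredundant. To see that such a $D$ is moreover \emph{maximal} irredundant, note that since $D$ dominates $G$ we have $N[D]=V$, so for any $w\notin D$ every vertex of $N[w]$ already lies in $N[D]$; hence $w$ gains no private neighbour in $D\cup\{w\}$, and $D\cup\{w\}$ fails to be irredundant. Thus each minimal dominating set belongs to the family of maximal irredundant sets, and taking minima gives $ir(G)\le\gamma(G)$. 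For the left bound I would simply observe that $\gamma(G)\le 2ir(G)-1$ forces $\tfrac{\gamma(G)}{2}\le ir(G)-\tfrac12<ir(G)$, so the whole content is the right-hand inequality.

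\textbf{The main inequality, up to a factor.} Let $S$ be an $ir$-set and let $B=V\setminus N[S]$ be the set of vertices it fails to dominate. If $B=\emptyset$ then $S$ is dominating and $\gamma(G)\le|S|=ir(G)\le 2ir(G)-1$, so I assume $B\ne\emptyset$. For $v\in S$ write $P_v=N[v]\setminus N[S\setminus\{v\}]$ for its nonempty set of private neighbours. The decisive use of maximality is: for each $b\in B$ the set $S\cup\{b\}$ is not irredundant, yet $b\notin N[S]$ makes $b$ its own private neighbour, so the only possible failure is that some $v\in S$ loses \emph{all} of its private neighbours, i.e.\ $P_v\subseteq N(b)$. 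Setting $C_v=\{b\in B:P_v\subseteq N(b)\}$, this says $B=\bigcup_{v\in S}C_v$. (Note also that for such a $v$ one has $v\notin P_v$, since $v\in P_v$ would put $b\in N(v)\subseteq N[S]$; so $v$ is dominated by $S\setminus\{v\}$.) Now for any $v$ with $C_v\ne\emptyset$, fixing one private neighbour $p_v\in P_v$ gives $p_v\in N(b)$ for every $b\in C_v$, so the single vertex $p_v$ dominates all of $C_v$. Hence $\{p_v:C_v\ne\emptyset\}$ dominates $B$, and together with $S$ it dominates $G$, which already yields $\gamma(G)\le|S|+|\{v:C_v\ne\emptyset\}|\le 2|S|=2ir(G)$.

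\textbf{The obstacle: saving the last vertex.} The genuine difficulty is sharpening $2ir(G)$ to $2ir(G)-1$, i.e.\ showing that one never needs a full extra copy of $S$. There are two regimes. If the cover $\{C_v\}$ of $B$ is redundant, some $C_v$ is contained in the union of the others, its representative $p_v$ may be omitted, and the save is immediate. The hard case is the \emph{irreducible} one, where each $C_v$ owns a private vertex $b_v\in B$; here I would try to trade a vertex $v_0\in S$ against a vertex of $B$ adjacent to all of $P_{v_0}$ so as to recover domination of $N[S]$ at no cost in cardinality while strictly shrinking the undominated set. The subtlety I expect to fight with is that such an exchange covers $P_{v_0}$ but not the whole of $C_{v_0}$, so a careful global selection (rather than any single naive swap) is needed to guarantee that exactly one vertex is saved in every configuration. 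This uniform accounting is the delicate combinatorial core of the Bollob\'as--Cockayne argument, and is where I expect essentially all of the real work to lie.
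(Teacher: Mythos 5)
First, a point of comparison: the paper does not prove this statement at all --- it is quoted from Bollob\'as and Cockayne \cite{BE} as background in the introduction --- so your proposal has to stand entirely on its own. On its own terms, the easy parts are correct and complete: every minimal dominating set is a maximal irredundant set (your maximality argument via $N[D]=V$ is right), giving $ir(G)\le\gamma(G)$; and the private-neighbour analysis of an $ir$-set $S$ is the standard one and is carried out correctly: each undominated $b$ is its own private neighbour in $S\cup\{b\}$, so maximality forces some $v\in S$ with $\emptyset\ne P_v\subseteq N(b)$, and adding one representative $p_v\in P_v$ for each relevant $v$ yields a dominating set of size at most $2\,ir(G)$, which also delivers $\gamma(G)/2\le ir(G)$.

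The genuine gap is exactly where you locate it: the theorem asserts $\gamma(G)\le 2\,ir(G)-1$, and the ``$-1$'' is never actually obtained. Your easy sub-cases (strictly fewer than $|S|$ relevant vertices, or a redundant cover $\{C_v\}$ of $B$) do give the saving, but in the remaining case --- every $v\in S$ has $C_v\ne\emptyset$ and each $C_v$ owns a private element of $B$ --- you describe a swap of $v_0\in S$ for a vertex $b$ adjacent to all of $P_{v_0}$ and then concede, accurately, that it fails: a vertex $b'\in C_{v_0}$ not covered by any other $C_v$ need not be adjacent to $b$, and no vertex of $P_{v_0}$ remains in the candidate set to catch it. Deferring the resolution to an unexecuted ``careful global selection'' means the stated inequality is not proved; as written you have only $\frac{\gamma(G)}{2}\le ir(G)\le\gamma(G)\le 2\,ir(G)$. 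One lever you already have but do not exploit is that every $b\in C_v$ is adjacent to \emph{every} vertex of $P_v$, not merely to the single representative $p_v$ you fixed in advance; making the choice of which vertex to discard (and which private neighbours to retain) depend on the global structure of the cover is where the argument of \cite{BE} does its work, and that step is missing here.
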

\begin{theorem}\cite{196}
    For every graph \( G \),
\[
\text{ir}(G) \leq \gamma(G) \leq i(G) \leq \alpha(G) \leq \Gamma(G) \leq \text{IR}(G).
\]
\end{theorem}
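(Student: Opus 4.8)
The plan is to establish the five inequalities of the chain by reducing everything to two structural facts connecting independence, domination, and irredundance. The first fact is the standard characterization of minimality: a dominating set $D$ is minimal if and only if every $v \in D$ has a private neighbor, i.e.\ a vertex in $N[v] \setminus N[D \setminus \{v\}]$; equivalently, a dominating set is minimal exactly when it is also irredundant. The second fact, which I would isolate as a lemma, is that every minimal dominating set is in fact a \emph{maximal} irredundant set. Granting these two, the two outer inequalities $\text{ir}(G) \le \gamma(G)$ and $\Gamma(G) \le \text{IR}(G)$ are immediate: a $\gamma$-set is a minimal dominating set, hence a maximal irredundant set of cardinality $\gamma(G)$, so $\text{ir}(G) \le \gamma(G)$; symmetrically a $\Gamma$-set is a maximal irredundant set of cardinality $\Gamma(G)$, so $\Gamma(G) \le \text{IR}(G)$.

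For the three middle inequalities I would argue through independence. First I would record two observations: a maximal independent set is always dominating (any vertex outside it must have a neighbor inside, otherwise it could be adjoined), and an independent dominating set is automatically a minimal dominating set (in an independent set each vertex is its own private neighbor, since none of its neighbors lie in the set). From the second observation, $\gamma(G) \le i(G)$ follows because an $i$-set is in particular a minimal dominating set. A maximum independent set is maximal, hence dominating, hence an independent dominating set, which gives $i(G) \le \alpha(G)$; and being an independent dominating set it is a minimal dominating set of cardinality $\alpha(G)$, which witnesses $\alpha(G) \le \Gamma(G)$.

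The crux is the lemma that a minimal dominating set $D$ is a maximal irredundant set. That $D$ is irredundant is just the minimality characterization above. For maximality, suppose toward a contradiction that some irredundant set $T$ properly contains $D$, and pick $u \in T \setminus D$. Irredundance of $T$ supplies $u$ with a private neighbor $p \in N[u]$ satisfying $p \notin N[T \setminus \{u\}]$; but $D \subseteq T \setminus \{u\}$ and $D$ is dominating, so $N[T \setminus \{u\}] \supseteq N[D] = V$, forcing $p \notin V$ — absurd. Hence no irredundant set strictly contains $D$, so $D$ is maximal irredundant.

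I expect the main obstacle to be the bookkeeping with private neighbors rather than any conceptual leap: one must keep straight the three roles a vertex can play (a member of the set, the vertex being dominated, and the private neighbor witnessing irredundance) and, crucially, verify the \emph{maximality} clause and not merely irredundance. The inequalities themselves are each a one-line consequence once the two lemmas are in place; the care is concentrated entirely in the private-neighbor arguments, and in particular in the observation $N[D] = V$ that forces the maximality of the irredundant set.
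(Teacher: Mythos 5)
Your proof is correct: the two lemmas you isolate (a dominating set is minimal if and only if it is irredundant; every minimal dominating set is a maximal irredundant set) together with the observations that maximal independent sets are dominating and that independent dominating sets are minimal dominating sets constitute exactly the standard argument for the Cockayne--Hedetniemi--Miller domination chain, and every step checks out, including the crucial maximality argument via $N[T\setminus\{u\}] \supseteq N[D] = V$. The paper states this theorem as a cited result and supplies no proof of its own, so there is nothing to compare against; your argument is the canonical one.
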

\begin{theorem}\cite{185}
    If \( G \) is a bipartite graph, then
\[
\text{ir}(G) \leq \gamma(G) \leq i(G) \leq \alpha(G) = \Gamma(G) = \text{IR}(G).
\]
\end{theorem}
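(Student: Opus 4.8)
The plan is to reduce the whole statement to a single nontrivial inequality. By Theorem 1.2 the chain
$\operatorname{ir}(G)\le\gamma(G)\le i(G)\le\alpha(G)\le\Gamma(G)\le\operatorname{IR}(G)$
holds for \emph{every} graph $G$, so every relation asserted here is already available except the collapse of the last three terms to equalities. Consequently it suffices to prove the single reverse inequality $\operatorname{IR}(G)\le\alpha(G)$ for bipartite $G$: combined with $\alpha(G)\le\Gamma(G)\le\operatorname{IR}(G)$ this forces $\alpha(G)=\Gamma(G)=\operatorname{IR}(G)$, while the inequalities $\operatorname{ir}\le\gamma\le i\le\alpha$ are inherited verbatim from Theorem 1.2.

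To bound $\operatorname{IR}(G)$ I would start from a maximal irredundant set $S$ with $|S|=\operatorname{IR}(G)$ and manufacture an independent set of the same cardinality. Split $S=S_0\cup S_1$, where $S_0$ is the set of vertices isolated in $G[S]$ (each being its own private neighbour) and $S_1=S\setminus S_0$. Since $S$ is irredundant, every $v\in S_1$ has a neighbour in $S$ and therefore must possess an \emph{external} private neighbour; fix one such vertex $p_v\notin S$, which by definition is adjacent to $v$ but to no other vertex of $S$. Distinct vertices of $S_1$ receive distinct private neighbours, and each $p_v$ lies outside $S$, so the only property that can fail is independence: in an arbitrary graph the $p_v$'s may be adjacent to one another, and this is exactly where bipartiteness has to enter.

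The key idea is to use the two colour classes to dissolve every potential conflict simultaneously. Write $V=A\cup B$ for the bipartition and set
\[
I=S_0\cup(S_1\cap A)\cup\{\,p_v : v\in S_1\cap B\,\}.
\]
For $v\in S_1\cap B$ the neighbour $p_v$ lies in $A$, so the block $(S_1\cap A)\cup\{p_v:v\in S_1\cap B\}$ is contained entirely in $A$ and is automatically independent; this is precisely how the hard case of mutually adjacent external private neighbours is handled, namely by pushing everything onto a single colour class. It then remains to check that adjoining $S_0$ preserves independence: each $v\in S_0$ is isolated in $G[S]$, hence has no neighbour in $S_1\cap A$, and by the defining property of external private neighbours no $p_v$ is adjacent to any vertex of $S\setminus\{v\}\supseteq S_0$. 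Counting with the distinctness noted above gives $|I|=|S_0|+|S_1|=|S|=\operatorname{IR}(G)$, so $\alpha(G)\ge\operatorname{IR}(G)$.

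The only genuine obstacle is producing this independent transversal from the private-neighbour data; the accompanying verifications (distinctness of the $p_v$, the equalities among the middle parameters) are immediate once the correct set $I$ is written down. I expect the one real subtlety to be the bookkeeping for $S_0$, whose vertices may fall in either colour class, so that independence of $I$ cannot be read off from ``$I\subseteq A$'' alone but genuinely relies on the total isolation of $S_0$ inside $G[S]$. A symmetric construction, keeping $S_1\cap B$ and replacing $S_1\cap A$ by its private neighbours in $B$, works identically and serves as a consistency check.
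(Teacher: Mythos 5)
The paper does not prove this statement at all: it is quoted as Theorem~1.3, a known result attributed to Cockayne, Favaron, Payan and Thomason \cite{185}, so there is no in-paper argument to compare against. Your proposal is a correct, self-contained proof, and it is essentially the classical argument from that reference: reduce everything to $\mathrm{IR}(G)\le\alpha(G)$ via the general chain, then convert a maximum maximal irredundant set $S$ into an independent set of the same size by keeping the isolated vertices $S_0$ of $G[S]$, keeping the non-isolated vertices on one colour class, and replacing each non-isolated vertex on the other colour class by an external private neighbour. All the delicate points check out: every $v\in S_1$ has a neighbour in $S$, hence lies in $N[S\setminus\{v\}]$ and must own an \emph{external} private neighbour $p_v\notin S$; the $p_v$ are pairwise distinct and disjoint from $S$ because each is adjacent to exactly one vertex of $S$; for $v\in S_1\cap B$ the vertex $p_v$ lands in $A$, so the non-$S_0$ part of $I$ sits in one colour class; and $S_0$ attaches independently because its vertices have no neighbours in $S$ and no $p_v$ is adjacent to any vertex of $S\setminus\{v\}$. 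One tiny remark on notation: maximality of $S$ is never needed, the construction works for any irredundant set of cardinality $\mathrm{IR}(G)$; and you should identify $\alpha(G)$ with what the paper's preliminaries call $\beta_0(G)$, since that symbol is the one actually defined there.
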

This paper further studies these concepts by establishing some characterizations of graphs with fixed condition of these parameters.

\section{On graphs $G$ with \(\gamma_t(G)\)\ = \(i_0(G)\)+1}\label{Pre}

Hamid, Balamurugan and Navaneethakrishnan \cite{Note} have proposed the following theorem to study the above concepts:
\begin{theorem}\label{con1}
	For any graph $G$, \(\gamma_t(G)\leq\)\ \(i_0(G)\)+1 and the bound is sharp.
\end{theorem}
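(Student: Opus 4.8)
The plan is to exploit the rigidity forced by maximality of an isolate set. Throughout I assume $G$ has no isolated vertex, since otherwise no total dominating set exists and $\gamma_t(G)$ is undefined. Let $S$ be an $i_0$-set, that is, a minimum maximal isolate set, so $|S| = i_0(G)$. Because $S$ is an isolate set, $G[S]$ has an isolated vertex $v$; since $G$ itself has no isolated vertex, $v$ has some neighbor $w$ in $G$, and $w \notin S$ as $v$ is isolated in $G[S]$. My candidate total dominating set is $S \cup \{w\}$, which has cardinality $i_0(G)+1$, so it suffices to prove this set totally dominates.

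The first step is to extract two structural consequences from the maximality of $S$. Fix any $u \in V \setminus S$. By maximality, $S \cup \{u\}$ is not an isolate set, i.e.\ $G[S \cup \{u\}]$ has no isolated vertex. Applying this to $u$ itself forces $u$ to have a neighbor in $S$, so $S$ is a dominating set. Applying it to each vertex $x$ that was isolated in $G[S]$ forces $u$ to be adjacent to $x$, since $u$ is the only candidate for a new neighbor of $x$. Hence every vertex outside $S$ is adjacent to all of the isolated vertices of $G[S]$; in particular, the chosen vertex $w$ is adjacent to every $G[S]$-isolated vertex.

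The second step is a direct verification that $N(S \cup \{w\}) = V$, checking that each vertex of $G$ has a neighbor inside $S \cup \{w\}$: every vertex of $V \setminus S$ (including $w$) has a neighbor in $S$ by the domination property; every vertex of $S$ that is non-isolated in $G[S]$ already has a neighbor in $S$; and every vertex of $S$ that is isolated in $G[S]$ is adjacent to $w$ by the consequence of maximality just established. This yields $\gamma_t(G) \le |S \cup \{w\}| = i_0(G)+1$.

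The step I expect to be the crux is the maximality analysis: recognizing that maximality of the isolate set simultaneously delivers domination and the stronger property that one common external vertex is adjacent to all $G[S]$-isolated vertices, which is precisely what lets a single added vertex $w$ repair the failure of total domination caused by those isolated vertices. For sharpness I would exhibit $K_n$ with $n \ge 2$: here a single vertex is already a maximal isolate set, so $i_0(K_n)=1$, while no single vertex totally dominates $K_n$, giving $\gamma_t(K_n)=2=i_0(K_n)+1$.
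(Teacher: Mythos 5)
Your proof is correct and follows essentially the same route as the source: the paper does not reprove Theorem 2.1 but cites Hamid et al., and its later use of that proof (``for any $v\in V\setminus S$, $S\cup\{v\}$ is a total dominating set'') shows the argument is exactly yours --- take an $i_0$-set, use maximality (the paper's Theorem 2.4) to see that every external vertex is adjacent to all isolates of $G[S]$ and that $S$ dominates, and add one external vertex. Your specific choice of $w$ as a neighbor of the isolated vertex is harmless but unnecessary, since your own maximality analysis shows any vertex of $V\setminus S$ would do.
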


In \cite{Note},  the author Hamid demonstrated a specific upper bound for the total dominating number $\gamma_t(G)$ of a graph using star graphs, namely $\gamma_t(G) = i_0(G) + 1$, and on this basis proposed a classification problem concerning such graphs. To thoroughly investigate and address this issue, we need to reference another crucial conclusion:
\begin{theorem}\cite{Chain}\label{lem}
	Let $S$ be an isolate set of a graph $G$. Then, $S$ is a maximal isolate set of $G$ if and only if every vertex in $V\setminus S$ is adjacent to all the isolates of $S$.
\end{theorem}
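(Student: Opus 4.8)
The plan is to prove both implications directly by tracking how the isolated vertices of $G[S]$ change when a single new vertex is adjoined. The crucial observation is that inserting a vertex $v\in V\setminus S$ into $S$ can only alter the isolation status of $v$ itself and of those vertices of $S$ that happen to be adjacent to $v$; every vertex of $S$ not adjacent to $v$ retains in $G[S\cup\{v\}]$ exactly the neighbors it had in $G[S]$. Consequently, $G[S\cup\{v\}]$ contains an isolated vertex precisely when either $v$ has no neighbor in $S$, or some isolate of $S$ fails to be adjacent to $v$. This dichotomy is the engine of the whole argument.

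For the sufficiency direction I would assume that every vertex of $V\setminus S$ is adjacent to all isolates of $S$ and show that $S$ is maximal. Fix an arbitrary $v\in V\setminus S$. Each isolate $w$ of $S$ becomes non-isolated in $G[S\cup\{v\}]$ because $v$ is adjacent to $w$; each non-isolate of $S$ still retains a neighbor lying in $S$; and $v$ itself has a neighbor in $S$, since $S$, being an isolate set, contains at least one isolate, to which $v$ is adjacent by hypothesis. Hence $G[S\cup\{v\}]$ has no isolated vertex, so $S\cup\{v\}$ is not an isolate set. As $v$ was arbitrary, no vertex can be added to $S$ while preserving the isolate property, so $S$ is a maximal isolate set.

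For the necessity direction I would argue by contraposition. Suppose some $v\in V\setminus S$ is not adjacent to some isolate $w$ of $S$. Since $w$ has no neighbor in $S$ and is also not adjacent to $v$, it remains isolated in $G[S\cup\{v\}]$; thus $S\cup\{v\}$ is still an isolate set, and $S$ is therefore not maximal. This establishes the contrapositive of the claimed implication.

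The argument is essentially a bookkeeping of neighborhoods, so I do not expect a genuine obstacle; the one point worth isolating is that the single hypothesis ``$v$ is adjacent to all isolates of $S$'' already subsumes the apparently separate requirement that $v$ have a neighbor in $S$. This is exactly where the definition of an isolate set, which guarantees the existence of at least one isolate, is invoked, and it explains why no additional condition is needed in the statement.
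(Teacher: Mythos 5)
Your argument is correct. Note that this paper does not prove the statement at all: it is quoted as a known result from \cite{Chain} and used as a tool, so there is no in-paper proof to compare against. Your neighborhood bookkeeping is the standard argument for this equivalence, and you rightly isolate the one non-obvious point, namely that the hypothesis ``$v$ is adjacent to all isolates of $S$'' already gives $v$ a neighbor in $S$ because an isolate set has at least one isolate. The only step you leave implicit is that ruling out one-vertex extensions suffices for maximality with respect to inclusion: if $S\cup T$ were an isolate set for some $T\subseteq V\setminus S$ with $|T|\ge 2$, then a vertex isolated in $G[S\cup T]$ either lies in $S$, in which case it is an isolate of $S$ nonadjacent to every vertex of $T$ and so $S\cup\{v\}$ is already an isolate set for any $v\in T$, or it lies in $T$, in which case that single vertex can be adjoined to $S$. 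Hence single-vertex maximality implies full maximality here; this is a one-line remark worth adding, not a gap.
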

Now, we have the following theorem.

\begin{theorem}\label{diam}
    If a connected graph $G$ satisfies \(\gamma_t(G)=i_0(G)\)+1, then $diam(G)$ \(\leq2\).
\end{theorem}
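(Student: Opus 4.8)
My plan is to prove the slightly stronger assertion that the hypothesis forces $G$ to contain a dominating vertex; since a dominating vertex lies within distance $1$ of every other vertex, $diam(G)\le 2$ is then immediate. The whole argument runs by contradiction against the identity $\gamma_t=i_0+1$: because Theorem \ref{con1} gives $\gamma_t\le i_0+1$ unconditionally, exhibiting \emph{any} total dominating set of size $i_0$ already contradicts the hypothesis. So the engine of the proof is to manufacture such a set out of a fixed $i_0$-set $S$. Write $B=V\setminus S$ and note $B\neq\varnothing$, for otherwise $S=V$ would give $\gamma_t\le n<i_0+1$. By Theorem \ref{lem} every isolate $w$ of $S$ is adjacent to all of $B$, and being isolated in $G[S]$ it has no neighbour inside $S$; hence $N(w)=B$ for each isolate.

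First I would show $S$ has a \emph{unique} isolate. If $w,w'$ were two isolates, then for any $z\in B$ the set $(S\setminus\{w'\})\cup\{z\}$ has size $i_0$ and totally dominates $G$: each isolate is covered by $z\in B$, every vertex of $B$ is covered by the surviving isolate $w$, and each non-isolated vertex of $S$ retains a neighbour in $S\setminus\{w'\}$. This contradiction leaves a single isolate $w$ with $N(w)=B$. Next set $A=S\setminus\{w\}$; each $a\in A$ is non-isolated in $G[S]$ and is non-adjacent to $w$ (as $A\cap N(w)=A\cap B=\varnothing$), so $a$ has a neighbour in $A$, i.e. $G[A]$ has minimum degree at least $1$.

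I claim $A=\varnothing$, which finishes the proof since then $S=\{w\}$ and $N(w)=V\setminus\{w\}$. If $A\neq\varnothing$, I keep $w$, adjoin one $z\in B$, and delete a well-chosen $d\in A$ to form $T=\{w\}\cup(A\setminus\{d\})\cup\{z\}$ of size $i_0$. Here $w$ dominates $B$ and is dominated by $z$ (and conversely), $d$ keeps a neighbour in $A$, and every $b\in B$ is dominated by $w$; the only danger is a vertex of $A$ whose \emph{sole} $A$-neighbour is $d$. When some component of $G[A]$ has order at least $3$ I choose $d$ to be a leaf of that component (or, if its minimum degree is $2$, any vertex of it); then no vertex of $A$ has $d$ as its only neighbour, so any $z\in B$ works and $T$ is a total dominating set of size $i_0$, the desired contradiction.

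I expect the real obstacle to be the degenerate case in which every component of $G[A]$ is a single edge, since then deleting either endpoint of an edge orphans the other and there is no free vertex to remove. The fix must invoke the global connectivity of $G$: some $a_0\in A$ has a neighbour in $B$ (otherwise its edge-component, having no edge to $w$ or to $B$, would be a component of $G$), so I delete the partner of $a_0$ and choose $z$ to be a $B$-neighbour of $a_0$, which absorbs the orphaned $a_0$. In every case a size-$i_0$ total dominating set appears, forcing $A=\varnothing$ and hence a dominating vertex. Verifying total domination after each deletion is routine but must be checked across all vertex classes ($w$, the vertices of $B$, those of $A\setminus\{d\}$, the deleted vertex, and $z$), and this bookkeeping is where an oversight would most easily hide.
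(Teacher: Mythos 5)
Your proof is correct, but it follows a genuinely different route from the paper's. The paper proves the diameter bound head-on: assuming $diam(G)\ge 3$, it fixes a diametral path $P$, uses Theorem~\ref{lem} to force an $i_0$-set to contain all of $V(P)$, and then deletes the two endpoints of $P$ while inserting a neighbour of $v_1$ to produce a total dominating set of size at most $i_0(G)$; the dominating vertex is only extracted afterwards, in Theorem~\ref{dv}, as a consequence of the diameter bound. You invert that order: you never mention the diameter at all, and instead perform surgery directly on an arbitrary $i_0$-set $S$ --- first forcing a unique isolate $w$ with $N(w)=V\setminus S$, then showing $S=\{w\}$ by exhibiting a size-$i_0$ total dominating set whenever $S\setminus\{w\}\neq\varnothing$ --- so the dominating vertex, and hence $diam(G)\le 2$, falls out in one stroke. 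Your argument therefore subsumes both Theorem~\ref{diam} and Theorem~\ref{dv}, and it closely parallels the paper's own two-case proof of the latter (two isolates versus one), except that your handling of the degenerate case in which every component of $G[S\setminus\{w\}]$ is a single edge is more careful than the paper's: you choose $z$ to be a $B$-neighbour of the vertex that would otherwise be orphaned, whereas the paper's proof of Theorem~\ref{dv} asserts that an \emph{arbitrary} $x\in V\setminus S$ works, which is not true for $K_2$ components. The price of your approach is the longer bookkeeping across vertex classes; what it buys is a self-contained proof of the final characterization without the intermediate geometric step.
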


\begin{proof}
	To the contrary, suppose $G$ is a graph which satisfies \(\gamma_t(G)=i_0(G)\)+1 and $diam(G)$ \(\geq3\). Then there exists a diametral path $P=v_1v_2...v_n$ where $n$ \(\geq4\).

Firstly, we claim that every \(i_0(G)\)-set $S$ contains some vertex in $P$. Otherwise, by Theorem~\ref{lem}, every vertex in $P$ is adjacent to every isolated vertex in $G[S]$. This contradicts to the select of diametral path $P$. 

Then, we assume that there exist some vertices of $P$ which are contained in a \(i_0(G)\)-set $S$ while there are still some vertices of $P$ outside of $S$. Since $diam(G)$ \(\geq3\), there must exist \(v_i,v_j\in V(P)\) such that $v_i\in S$ and $v_j\in V\setminus S$ with \(d(v_i,v_j)\geq2\) in $P$. Then by Theorem~\ref{lem} we know that these two vertices are adjacent, which is impossible since $P$ is an induced subpath.

Finally, we can assume that \(V(P)\subseteq S\). By the proof of Theorem~\ref{con1} in \cite{Note}, for any \(v\in V\setminus S\), \(S\cup \{v\}\) is a total dominating set. And since \(\gamma_t(G)=i_0(G)\)+1, \(S\cup \{v\}\) is a minimum total dominating set. Now  we consider any vertex adjacent to \(v_1\). Since $P$ is a diametral path, every vertex in $N$(\(v_1\)) must be adjacent to some vertices in $V(P)$\text{-}\(\{v_1, v_n\}\). Hence $(S-\left\{v_1,v_n\right\})$\(\cup \{v\}\) is a total dominating set with cardinality less than \(i_0(G)+1\), where $v$ is a neighbor of $v_1$. This leads a contradiction to the definition of the total domination number.

Now we can assume that $deg(v_1)=deg(v_n)=1$. But now since $G[S]$ contains an isolated vertex, a similar discussion will lead to a contradiction. 

Above all, any graph $G$ which satisfies  \(\gamma_t(G)=i_0(G)\)+1, its diameter  is at most 2.
\end{proof}

Applying this theorem, we have the following conclusion:
\begin{theorem}\label{dv}
	Every connected graph G which satisfies \(\gamma_t(G)=i_0(G)\)+1 must have a dominating vertex. 
\end{theorem}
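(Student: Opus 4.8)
The plan is to combine the diameter bound of Theorem~\ref{diam} with the characterization of maximal isolate sets in Theorem~\ref{lem}, and to show that if $i_0(G)\ge 2$ one can build a total dominating set of size at most $i_0(G)$, contradicting $\gamma_t(G)=i_0(G)+1$. Equivalently, I would prove $i_0(G)=1$, which is the same as the existence of a dominating vertex: a single vertex forms a maximal isolate set precisely when it is adjacent to all others.

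First I would fix an $i_0(G)$-set $S$ with $|S|=k:=i_0(G)$, so that $\gamma_t(G)=k+1$, and pick an isolated vertex $w$ of $G[S]$. Since $w$ has no neighbour inside $S$, Theorem~\ref{lem} forces $N(w)=V\setminus S$; more generally every isolated vertex of $G[S]$ has open neighbourhood exactly $V\setminus S$. (Note $V\setminus S\neq\varnothing$, since $G$ is connected and nontrivial, so it cannot equal a set containing an isolated vertex of $G$.)

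The decisive step uses $diam(G)\le 2$: every $u\in S\setminus\{w\}$ is non-adjacent to $w$, hence at distance exactly $2$ from $w$, so $u$ has a common neighbour with $w$, which must lie in $N(w)=V\setminus S$. Writing $I$ for the set of isolated vertices of $G[S]$ (so $|I|\ge 1$) and $B=S\setminus I$ for the non-isolated ones, I would choose for each $u\in B$ such an external neighbour $x_u\in V\setminus S$, set $X=\{x_u:u\in B\}$, and verify that $\{w\}\cup X$ is a total dominating set: the vertices of $I$ are dominated by any vertex of $X\subseteq V\setminus S$, the vertices of $B$ by their chosen $x_u$, and all of $V\setminus S$ by $w$ (with $w$ and $X$ mutually adjacent). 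Since $|X|\le |B|=k-|I|\le k-1$, this produces a total dominating set of size at most $k$, contradicting $\gamma_t(G)=k+1$ whenever $k\ge 2$. The degenerate subcase $B=\varnothing$ (all of $S$ isolated) I would treat separately: then for any $x\in V\setminus S$ the pair $\{w,x\}$ already totally dominates $G$, so $\gamma_t(G)=2$ and again $k=1$.

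I expect the main obstacle to be the verification that the constructed set is genuinely \emph{total}—in particular ruling out a vertex of $S$ with no neighbour outside $S$, which is exactly what the diameter-$2$ hypothesis rescues—together with the bookkeeping that keeps the cardinality at most $k$. Once $k=1$ is forced, $S=\{w\}$ with $N(w)=V\setminus\{w\}$ exhibits the required dominating vertex.
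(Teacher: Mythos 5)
Your proof is correct, and while it rests on the same two ingredients as the paper's argument (the diameter bound of Theorem~\ref{diam} and the maximality characterization of Theorem~\ref{lem}) and shares the overall strategy of refuting $i_0(G)\ge 2$ by exhibiting a total dominating set of size at most $i_0(G)$, the construction is genuinely different. The paper splits into two cases according to whether $G[S]$ has several isolated vertices or exactly one, and in each case produces its small total dominating set by swapping a single vertex of $S$ for an external vertex; the second case requires choosing a non-cut-vertex of a component of $G[S]$ and becomes delicate (the $K_2$-component subcase is only sketched, and it is precisely there that one needs to know that the surviving endpoint has an external neighbour). You avoid this entirely by discarding most of $S$: the set $\{w\}\cup X$, built from one isolate $w$ together with one external neighbour $x_u$ for each non-isolate $u\in B$, is verified to be total in a uniform way, and the existence of each $x_u$ is exactly what the diameter-$2$ hypothesis supplies, via a common neighbour of the non-adjacent pair $u,w$ that must lie in $N(w)=V\setminus S$. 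The cardinality bound $1+|B|\le k$ then closes the argument with no connectivity bookkeeping, and your separate treatment of $B=\varnothing$ together with the remark that $V\setminus S\neq\varnothing$ covers the degenerate situations. In this sense your route is cleaner and in fact repairs the soft spot in the paper's Case~2.
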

\begin{proof}
     Let $G$ be any connected graph which satisfies  \(\gamma_t(G)=i_0(G)\)+1. By Theorem~\ref{diam} we know that $diam(G)$ \(\leq 2\).

If $diam(G)=0$, then $G$ is trivial.

If $diam(G)=1$, we claim that $G$ is complete, otherwise there exist two vertices $u$ and $v$ are nonadjacent to each other. Hence \(d_G(u,v)\geq 2\), which contradicts to the diameter of $G$. Now $G$ is complete and obviously satisfies \(\gamma_t(G)=i_0(G)\)+1. Every vertex in $G$ is a dominating vertex.

If $diam(G)=2$, we consider the cardinality of an \(i_0(G)\)-set $S$. Suppose to the contrary that $G$ contains no dominating vertex i.e. \(i_0(G)\) \(\geq 2\). And then we have the following cases:

\textbf{Case 1}. $S$ contains more than one isolated vertex in $G[S]$.

Without loss of generality, we assume that $u$ and $v$ are two isolated vertices in $G[S]$. By Theorem~\ref{lem}, $u$ and $v$ are adjacent to every vertex in \(V\setminus S\). Select an arbitrary vertex \(w\in V\setminus S\). We can easily see that \((S-\{v\})\cup \{w\}\) is a total dominating set. Hence \(\gamma_t(G)\leq i_0(G)<i_0(G)+1\), which is a controdiction.

\textbf{Case 2}. $S$ contains only one isolated vertex $v$ in $G[S]$.

Since $G$ is connected, we select one component $Q$ \(\subseteq G[S]\) to discuss. If there is a vertex $w$ which is not a cut-vertex of $Q$, then select an arbitrary \(x\in V\setminus S\) so we have a total dominating set \((S-\{w\})\cup \{x\}\) with cardinality equals $S$. This is impossible. Since there is only one isolated vertex in $G[S]$, we can always find the above vertex we want. Note that if $Q$ is \(K_2\), then we can select an arbitrary endpoint and repeat the operation above. Therefore, we can always construct a total dominating set with cardinality equals \(i_0(G)\) which leads to a contradiction.

Above all, we have that every connected graph which satisfies the condition  \(\gamma_t(G)=i_0(G)\)+1 must have a dominating vertex.
\end{proof}
Then consider a graph $G$ that has a dominating vertex, we can easily get that \(\gamma_t(G)\)= 2 and \(i_0(G)\)= 1 and so \(\gamma_t(G)=i_0(G)\)+1. Hence we can give the following characterization:
\begin{corollary}
    A connected graph $G$ satisfies  \(\gamma_t(G)=i_0(G)\)+1 if and only if $G$ contains a dominating vertex.
\end{corollary}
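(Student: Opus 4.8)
The plan is to split the biconditional into its two implications. The forward direction---that $\gamma_t(G) = i_0(G)+1$ forces the existence of a dominating vertex---is precisely the content of Theorem~\ref{dv}, so I would simply invoke it. All the substantive work of this proof has already been carried out there, by way of the diameter bound in Theorem~\ref{diam} together with the case analysis on the number of isolated vertices in an $i_0(G)$-set. Thus the corollary reduces to establishing the converse.

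For the converse, I assume $G$ is a connected graph possessing a dominating vertex $v$, and I would compute both parameters directly. First I claim $i_0(G) = 1$. The singleton $\{v\}$ is an isolate set, since $G[\{v\}]$ consists of the single (isolated) vertex $v$. To see that it is maximal, I apply Theorem~\ref{lem}: the set $\{v\}$ is a maximal isolate set if and only if every vertex of $V \setminus \{v\}$ is adjacent to the isolate $v$, and this holds exactly because $v$ dominates $G$. As any isolate set is nonempty, it follows that $i_0(G) = 1$.

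Next I would show $\gamma_t(G) = 2$. For the lower bound, observe that no single vertex $w$ can form a total dominating set: we have $N(\{w\}) = N(w)$, which cannot equal $V$ because $w \notin N(w)$ in a simple graph; hence $\gamma_t(G) \geq 2$. For the upper bound, choose any neighbor $u$ of $v$, which exists since $G$ is connected and nontrivial, and set $S = \{v, u\}$. Every vertex other than $v$ is adjacent to $v$, and $v$ is adjacent to $u$, so $N(S) = V$ and $S$ is a total dominating set of size $2$. Therefore $\gamma_t(G) = 2 = i_0(G)+1$, which establishes the converse and completes the proof.

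Once Theorem~\ref{dv} is available, the argument presents no genuine obstacle; the only point demanding care is the lower bound $\gamma_t(G) \geq 2$, where one must recall that a total dominating set is required to dominate even its own members, which rules out singletons and is the feature distinguishing $\gamma_t$ from the ordinary domination number.
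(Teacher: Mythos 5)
Your proposal is correct and follows exactly the paper's route: the forward direction is Theorem~\ref{dv}, and the converse is the direct computation $i_0(G)=1$ and $\gamma_t(G)=2$ for a graph with a dominating vertex (the paper states this computation in one line before the corollary; you merely spell out the details). No substantive difference.
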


\section{On trees $T$ with \texorpdfstring{$\gamma_t(T)=n-l$}{gamma_t(T)=n-l}}
Trees are perhaps the simplest of all graph families, so it is not surprising that much research has been done involving domination in trees. In \cite{ID}, Hamid and Balamurugan said that the study of parameters in domination for trees would be interesting. Hence in this section, we focus our discussion on trees.

We begin with a simple observation about dominating sets in trees. If a \(\gamma\)-set in a tree $T$ of order \(n\geq 3\) contains a leaf, then we can simply replace this leaf in $S$ with its support vertex to produce another \(\gamma\)-set of $T$. Thus, for a tree $T$ of order \(n\geq 3\) with $l$ leaves, we have the following results:
\begin{itemize}
    \item[(a)] There is a \(\gamma\)-set of $T$ that contains no leaf.
    \item[(b)] \(\gamma(T)\leq n-l\).
\end{itemize}

Naturally, we would like to consider the case for trees $T$ when \(\gamma(T)= n-l\).

A \textit{caterpillar} is a tree with the property that removing all of its leaves forms a path. This path is called the \textit{spine} of this tree. The \textit{code} of the caterpillar having spine \(P_k\) : \(v_1 v_2... v_k\) is the ordered $k$-tuple (\(l_1,l_2,...,l_k\)), where \(l_i\) is the number of leaves in the caterpillar adjacent to \(v_i\) for \(i\in [k]\). Through this concept, we have:
\begin{theorem}
    If $T$ is a tree of order \(n\geq 3\) and $l$ leaves. Then \(\gamma(T)= n-l\) if and only if every vertex in $T$ is either a leaf or a support vertex.
\end{theorem}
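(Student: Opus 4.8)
The plan is to prove the two implications separately, working throughout with the partition of $V(T)$ into the set $L$ of leaves, the set $S$ of support vertices, and the set $I$ of the remaining vertices (those of degree at least $2$ having no leaf neighbour). Since $n\geq 3$ these three sets are pairwise disjoint, and the hypothesis ``every vertex is a leaf or a support vertex'' is exactly the statement $I=\emptyset$. Two preliminary observations set up both directions: first, $|S|+|I|=n-l$; second, $V\setminus L=S\cup I$ is always a dominating set, because each leaf is adjacent to its support vertex while every non-leaf already lies in the set. This recovers the bound $\gamma(T)\leq n-l$ for free and fixes the baseline dominating set I will modify.

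For the direction ($\Leftarrow$) I would assume $I=\emptyset$, so that $V=L\cup S$ and $|S|=n-l$. Invoking observation (a) stated before the theorem, fix a $\gamma$-set $D$ containing no leaf, whence $D\subseteq V\setminus L=S$. The key point is that $D$ must in fact contain \emph{every} support vertex: each support vertex $s$ carries a leaf $x$ whose only neighbour is $s$, so in order to dominate $x$ without using $x$ itself one is forced to place $s$ in $D$. Hence $S\subseteq D$, giving $\gamma(T)=|D|\geq|S|=n-l$; combined with the always-valid bound $\gamma(T)\leq n-l$ this yields the desired equality.

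For the direction ($\Rightarrow$) I would argue by contraposition: assuming $I\neq\emptyset$, I will exhibit a dominating set of size strictly below $n-l$, forcing $\gamma(T)<n-l$. Pick any $w\in I$ and consider $D_0=(S\cup I)\setminus\{w\}$, of cardinality $n-l-1$. I claim $D_0$ still dominates $T$: every leaf is dominated by its support vertex, which lies in $S$ and is untouched since $w\notin S$; every non-leaf other than $w$ lies in $D_0$; and $w$ itself, having no leaf neighbour, has all of its neighbours in $S\cup I$, at least one of which survives in $D_0$ because $\deg(w)\geq 2$. Thus $\gamma(T)\leq n-l-1<n-l$, completing the contrapositive.

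I expect the only delicate step to be the forced-membership argument in ($\Leftarrow$): one must be careful that a leaf-free dominating set has no alternative but to include each support vertex, a fact that rests precisely on the uniqueness of a leaf's neighbour in a tree. The remaining verifications — that the modified set $D_0$ in ($\Rightarrow$) still dominates the deleted vertex $w$ as well as every leaf — are routine case checks hinging only on $w$ having degree at least $2$ and no leaf neighbour, and I would not belabour them.
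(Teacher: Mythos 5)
Your proof is correct and follows essentially the same route as the paper's: the backward direction forces every support vertex into a leaf-free $\gamma$-set, and the forward direction rests on the observation that deleting a non-leaf, non-support vertex $w$ from $V\setminus L$ leaves a dominating set because all of $w$'s neighbours are themselves non-leaves. Your contrapositive packaging of the forward direction is just the paper's minimality argument for the $\gamma$-set $V\setminus L$ read in the other direction, though your version is slightly cleaner in that it sidesteps the paper's side case where $V\setminus L$ is a single vertex.
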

\begin{proof}
	Suppose $T$ is a tree such that every vertex in $T$ is either a leaf or a support vertex. In order to dominate all leaves, every support vertex must be choosen into the $\gamma(T)$-set and hence $\gamma(T)\ge n-l$. Since every vertex in $T$ is either a leaf or a support vertex, the vertex set containing all support vertex is a dominating set of $T$ and hence \(\gamma(T)\leq n-l\). Therefore, $\gamma(T)=n-l$.

Conversely, suppose there is a tree $T$ of order \(n\geq 3\) and $l$ leaves with \(\gamma(T)= n-l\). By result (a) above, there is a \(\gamma\)-set of $T$ that contains no leaf. Let $L$ denote the set of all leaves in $T$ and hence $V(T)\setminus L$ is a \(\gamma\)-set of $T$. The induced subtree of $V(T)\setminus L$ has no isolated vertex except when $V(T)\setminus L$ is a single vertex and this case is done. Denote $Q=V(T)\setminus L$, since $V(T)\setminus L$ is a \(\gamma\)-set of $T$, $Q$ -\(\{v\}\) is is not a dominating set of $T$ for an arbitrary \(v\in Q\). Hence $v$ must be adjacent to some vertices that $Q$ -\(\{v\}\) do not dominate. Hence these vertices must be leaves and their support vertex is $v$. By the randomicity of $v$, every vertex in $V(T)-L$ is adjacent to some vertices in $L$. Hence every vertex in $T$ is either a leaf or a support vertex.
\end{proof}

In addition, the discussion about total domination number is also valuable and in \cite{Dela} DeLaVi$\widetilde{n}$a \textit{et al} give us a lower bound of total domination number in graphs:
\begin{theorem}\label{Tdiam}
    If $G$ is a nontrivial connected graph, then \(\gamma_t(G)\geq \frac{1}{2}(diam(G)+1)\).
\end{theorem}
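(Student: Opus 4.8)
The plan is to fix a minimum total dominating set $S$ with $|S|=\gamma_t(G)=:k$ and a diametral path $P=v_0v_1\cdots v_d$ where $d=diam(G)$, and then to bound the number $d+1$ of vertices of $P$ from above by $2k$. Two structural facts set this up. First, because $P$ is a shortest path, $d(v_i,v_j)=|i-j|$ for all indices $i,j$. Second, because $S$ is a total dominating set, $N(S)=V$ forces every vertex of $S$ to have a neighbour inside $S$, so $G[S]$ has no isolated vertex and each of its connected components has at least two vertices. Writing $C_1,\dots,C_p$ for these components, we get $k=\sum_r|C_r|\ge 2p$, hence $p\le k/2$.

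First I would record the naive local estimate: a single vertex $s$ is adjacent to path vertices whose indices span at most $2$, since $s\sim v_i$ and $s\sim v_j$ give $|i-j|=d(v_i,v_j)\le 2$; thus $s$ dominates at most three consecutive vertices of $P$. Used directly this yields only $k\ge(d+1)/3$, which is too weak. The improvement to the factor $1/2$ comes from charging whole components rather than single vertices. The key claim is that each component $C_r$ is adjacent to at most $|C_r|+2$ vertices of $P$. To prove it, let $v_a$ and $v_b$ be the dominated path vertices of smallest and largest index, witnessed by $x,y\in C_r$ with $x\sim v_a$ and $y\sim v_b$. Since $C_r$ is connected in $G$, a spanning path of $C_r$ gives $d(x,y)\le|C_r|-1$, and combining the triangle inequality with the geodesic property yields $b-a=d(v_a,v_b)\le d(v_a,x)+d(x,y)+d(y,v_b)\le 1+(|C_r|-1)+1=|C_r|+1$. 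Every path vertex dominated by $C_r$ has index in $[a,b]$, so there are at most $|C_r|+2$ of them.

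Summing the key claim over all components and using that every vertex of $P$ is totally dominated by $S$ gives $d+1\le\sum_{r=1}^{p}(|C_r|+2)=k+2p\le k+k=2k$, whence $\gamma_t(G)=k\ge\tfrac12(diam(G)+1)$. The hard part is exactly the jump from the factor $1/3$ to $1/2$: the per-vertex reach of $3$ is genuinely attainable, so one cannot avoid using the totality hypothesis, and the right way to use it is to notice that totality packages the dominators into connected pieces of size at least two whose \emph{combined} reach grows only like $|C_r|+2$ rather than $3|C_r|$, because a long reach would create a shortcut contradicting the geodesic property of $P$. Checking the estimate against even paths, where each $K_2$ component of an optimal $\gamma_t$-set dominates exactly four path vertices (so $|C_r|+2=4$ is attained), confirms that the constants cannot be improved and that the bound is sharp.
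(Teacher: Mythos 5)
Your proof is correct. One small wording slip: not every connected graph has a spanning path, so the bound $d(x,y)\le |C_r|-1$ should be justified simply by noting that a shortest $x$--$y$ path inside the connected subgraph $G[C_r]$ visits distinct vertices of $C_r$; the estimate is the same. As for comparison: the paper does not prove this theorem directly (it cites DeLaVi\~{n}a et al.), but it effectively re-derives the inequality inside the proof of the theorem that follows, and the route there differs from yours. The paper first invokes a lemma producing a spanning tree $T$ in which the $\gamma_t$-set $D$ remains a $\gamma_t$-set with the same number $k$ of components, and then classifies the edges of a longest path of $T$ into three types: edges inside $D$ (at most $\gamma_t(T)-k$, since $T[D]$ is a forest with $k$ components), edges between $D$ and $V\setminus D$ (at most $2k$, entering and leaving each component at most once), and edges inside $V\setminus D$ (at most $k-1$, obtained from the global edge count $n-1=m_1+m_2+m_3$), which yields $diam(G)\le diam(T)\le 2\gamma_t(T)-1$. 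Your argument replaces all of this tree machinery with a direct triangle-inequality computation on a geodesic of $G$, charging each component $C_r$ of $G[S]$ with at most $|C_r|+2$ path vertices and summing. Both proofs hinge on the same two facts --- every component of the induced subgraph on a total dominating set has at least two vertices, and a component's reach along a geodesic is its size plus a constant $2$ rather than three times its size --- but yours is more elementary and self-contained, needing no spanning tree lemma, while the paper's tree version has the advantage of setting up the equality analysis (which inequalities become tight) that drives the caterpillar characterization in the subsequent theorem.
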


The following Lemma can be found in \cite{Dela} and we will use this lemma to characterize the graphs with \(\gamma_t(G)= \frac{1}{2}(diam(G)+1)\).
\begin{lemma}\label{lem'}
    If $D$ is a \(\gamma_t\)-set in a nontrivial connected graph $G$, then there exists a spanning tree $T$ of $G$ such that the following properties hold:
    \item[(1)] $D$ is a \(\gamma_t\)-set in $T$, and
    \item[(2)] the number of components in $T[D]$ equals the number of components in $G[D]$.
\end{lemma}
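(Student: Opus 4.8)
The plan is to build the spanning tree $T$ by hand, starting from the vertices of $D$ and working outward, so that both the total domination and the component count are enforced by the construction rather than recovered afterward. The first thing I would record are two structural facts. Since $D$ is a total dominating set, every vertex of $D$ has a neighbor in $D$, so $G[D]$ has no isolated vertices; and, by the definition of a component, $G$ contains no edge joining two distinct components of $G[D]$. Write $c$ for the number of components of $G[D]$; this is the target value in property~(2).

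Next I would assemble $T$ in three layers. First, choose a spanning tree inside each component of $G[D]$; their union is a forest $F$ on the vertex set $D$ with exactly $c$ components, and since each component of $G[D]$ has at least two vertices, every vertex of $D$ already has a neighbor in $D$ within $F$. Second, for each $v \in V\setminus D$ pick one neighbor $f(v) \in D$ (which exists because $D$ dominates $G$) and add the pendant edge $vf(v)$; the resulting graph $H$ is still a forest with $c$ components, it covers all of $V$, and its only edges inside $D$ are those of $F$. Third, since $G$ is connected and $H$ is spanning, I would add $c-1$ edges of $G$ joining distinct components of $H$ to obtain a connected spanning subgraph with $|V|-1$ edges, which is therefore a spanning tree $T$.

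With $T$ in hand, both properties fall out of the construction. For property~(2), every edge of $T$ lying inside $D$ is either an edge of $F$ or one of the $c-1$ connecting edges; but a connecting edge joining two vertices of $D$ would join two distinct components of $G[D]$, which is impossible by the structural fact above, so $T[D]=F$ and has exactly $c$ components. For property~(1), each $v\in V\setminus D$ retains its neighbor $f(v)\in D$ and each vertex of $D$ retains a $D$-neighbor from $F$, so $D$ is a total dominating set of $T$; minimality is immediate because $T$ is a spanning subgraph of $G$, whence every total dominating set of $T$ is also a total dominating set of $G$ and $\gamma_t(T)\ge\gamma_t(G)=|D|$, forcing $\gamma_t(T)=|D|$.

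The step I expect to carry the real weight is the coordination in the middle layer: an arbitrary spanning tree extending $F$ need not keep $D$ totally dominating, because an outside vertex could end up attached to $T$ only through other outside vertices and lose its $D$-neighbor. Committing to the explicit pendant edges $vf(v)$ before connecting the pieces is what rescues total domination, while the observation that no edge of $G$ crosses between components of $G[D]$ is what simultaneously pins down the exact component count. Checking that these two demands never conflict --- that the connecting edges can always be chosen among edges incident to $V\setminus D$ --- is the only point that needs genuine care.
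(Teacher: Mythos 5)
Your construction is correct: the key observations (that $G[D]$ has no isolated vertices, that no edge of $G$ can join two distinct components of the induced subgraph $G[D]$, and that minimality of $D$ in $T$ follows from $T$ being a spanning subgraph of $G$) all hold, and the three-layer assembly does yield a spanning tree with both properties. The paper itself gives no proof of this lemma but merely cites it to DeLaVi\~na et al., and your argument is essentially the standard construction used there, so there is nothing to flag.
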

\begin{theorem}
    If a nontrivial connected graph $G$ satisfies \(\gamma_t(G)= \frac{1}{2}(diam(G)+1)\), then $G$ has a spanning subtree $T$ which is a caterpillar satisfies $diam(T)= diam(G)$.
\end{theorem}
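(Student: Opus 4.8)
The plan is to pass from $G$ to a well-chosen spanning tree using Lemma~\ref{lem'}, and then to show that \emph{any} tree meeting the bound of Theorem~\ref{Tdiam} with equality is already a caterpillar of the same diameter.

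Set $d = diam(G)$ and let $D$ be a $\gamma_t$-set of $G$, so $|D| = \tfrac{1}{2}(d+1)$; in particular $d$ is odd, say $d = 2k-1$ and $|D| = k$. First I would invoke Lemma~\ref{lem'} to obtain a spanning tree $T$ of $G$ in which $D$ is still a $\gamma_t$-set, so that $\gamma_t(T) = |D| = \tfrac{1}{2}(d+1)$. Since $T$ is a spanning tree of the connected graph $G$, distances can only grow, whence $diam(T) \ge diam(G) = d$. Applying Theorem~\ref{Tdiam} to the tree $T$ then gives
\[
\tfrac{1}{2}(d+1) = \gamma_t(T) \ge \tfrac{1}{2}(diam(T)+1) \ge \tfrac{1}{2}(d+1),
\]
so every inequality is forced to be an equality. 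This simultaneously yields $diam(T) = diam(G)$ and $\gamma_t(T) = \tfrac{1}{2}(diam(T)+1)$, and reduces the problem to proving that a tree attaining this equality is a caterpillar.

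For the tree $T$, fix a diametral (equivalently longest) path $P = v_0 v_1 \cdots v_{2k-1}$; it has exactly $2k$ vertices while $|D| = k$. The heart of the argument is a double count of the edges between $D$ and $V(P)$. On one hand, total domination forces each $v_i$ to have at least one neighbor in $D$, so this edge count is at least $2k$. On the other hand, in a tree every off-path vertex is adjacent to at most one vertex of $P$ (a second such adjacency would close a cycle), a path endpoint has one neighbor on $P$, and an interior path vertex has exactly two; hence each element of $D$ contributes at most $2$ to the count, giving an upper bound of $2|D| = 2k$. Both estimates being tight forces $D$ to consist solely of interior vertices of $P$ and forces every $v_i$ to have exactly one neighbor in $D$; in particular $D \subseteq V(P)$.

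Finally I would extract the caterpillar structure. Since $D \subseteq V(P)$, every vertex outside $P$ must be dominated by a path vertex and so is adjacent to $P$. A short tree argument then shows such an off-path vertex cannot have a further off-path neighbor, since that neighbor would also be adjacent to $P$ and close a cycle with the subpath of $P$ between the two attachment points; hence every vertex off $P$ is a leaf hanging on $P$. Deleting all leaves of $T$ therefore leaves a subpath of $P$, so $T$ is a caterpillar, and we already have $diam(T) = diam(G)$, as required. I expect the main obstacle to be the double-counting step and, more precisely, squeezing out of its tightness that $D$ lies entirely in the interior of $P$; once that is secured, both the caterpillar conclusion and the diameter equality follow quickly.
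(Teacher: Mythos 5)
Your proposal is correct, and it reaches the conclusion by a genuinely different route than the paper. Both arguments begin identically, invoking Lemma~\ref{lem'} to transfer the $\gamma_t$-set $D$ to a spanning tree $T$, but they then diverge. The paper essentially re-proves the bound $diam(T)\le 2\gamma_t(T)-1$ from scratch by partitioning the edges of a longest path $P$ of $T$ into three classes (edges inside $T[D]$, edges between $D$ and $V\setminus D$, edges inside $T[V\setminus D]$), bounding each class in terms of the number $k$ of components of $T[D]$ --- which is why it needs part (2) of Lemma~\ref{lem'} --- and then extracting the caterpillar structure from the three resulting equalities; the final step of that extraction is the least transparent part of the paper's argument. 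You instead apply Theorem~\ref{Tdiam} to $T$ as a black box and squeeze, using $diam(T)\ge diam(G)$ for a spanning subgraph, which yields $diam(T)=diam(G)$ at once; you then replace the three-way edge partition by a single double count of adjacent pairs in $D\times V(P)$, with the lower bound $2k$ coming from total domination and the upper bound $2|D|=2k$ from the fact that in a tree each vertex has at most two neighbours on $P$, with equality only for interior path vertices. Tightness then gives the sharper structural fact that $D$ lies in the interior of $P$ and every off-path vertex is a leaf attached to $P$, from which the caterpillar property is immediate. Your version uses only part (1) of Lemma~\ref{lem'} and dispenses with the component count $k$ entirely, at the price of discarding the extra information the paper's argument retains about the components of $T[D]$; on balance your endgame is cleaner and easier to verify.
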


\begin{proof}
	Let $D$ be a \(\gamma_t\)-set of $G$, and let $k$ be the number of components in $G[D]$. By Lemma~\ref{lem'}, there exists a spanning tree $T$ of $G$ such that $D$ is a \(\gamma_t\)-set of $T$ and the number of components in $T[D]$ equals $k$. Since $D$ is a total dominating set of $T$, every component in $T[D]$ contains at least two vertices. Thus, \(\gamma_t(G)\) \(\geq\) $2k$. Let $P$ be a longest path in $T$. Let \(p_1\) denote the number of edges of $P$ in $T[D]$ and \(p_2\) denote the number of edges of $P$ between $D$ and $V\setminus D$ and let \(p_3\) denote the number of edges of $P$ in $T[V\setminus D]$.

 Since $T[D]$ is a forest with $k$ components, $T[D]$ contains \(\gamma_t(T)\)-$k$ edges and hence \(p_1\leq\) \(\gamma_t(T)\)-$k$. In traversing a longest path in $T$, we can enter and leave each component of $T[D]$ at most once. Hence \(p_2\leq\) $2k$. For the value of \(p_3\), let \(m_1\) denote the number of edges in $T[D]$ and \(m_2\) denote the number of edges between $D$ and $V\setminus D$ and let \(m_3\) denote the number of edges in $T[V\setminus D]$. By the discussion above, \(m_1\)= \(\gamma_t(T)\)-$k$. Since $D$ is a total dominating set of $T$, every vertex in $V\setminus D$ is adjacent at least one vertex in $D$ and hence \(m_2\geq\) $n$-\(\gamma_t(T)\). Then by the equation $n-1$=\(m_1+m_2+m_3\) we have \(p_3\leq m_3\leq\) $k-1$. Thus $diam(T)$=\(p_1+p_2+p_3\)\(\leq\) \(\gamma_t(T)\)+$2k-1$\(\leq\) 2\(\gamma_t(T)\)-1. Therefore we have $diam(G)$\(\leq\) $diam(T)$\(\leq\) 2\(\gamma_t(T)\)-1.

 Now we have \(\gamma_t(G)= \frac{1}{2}(diam(G)+1)\), hence $diam(G)=diam(T)$=2\(\gamma_t(T)\)-1. This implies $diam(T)$=\(p_1+p_2+p_3\) =\(\leq\) 2\(\gamma_t(T)\)-1. Therefore $diam(T)$=\(p_1+p_2+p_3\)=2\(\gamma_t(T)\)-1 and we have the equality on the above \(p_i\), $i=1,2,3$. For \(p_1\)=\(\gamma_t(T)\)-$k$, this implies that every component in $T[D]$ is a path and the $k$ paths are all contained in $P$. For \(p_3\)=$k-1$, this implies \(m_3\)=$k-1$ and hence \(m_2\)=$n$-\(\gamma_t(T)\). Since every vertex in $V\setminus D$ is adjacent at least one vertex in $D$, every vertex in $V\setminus D$ is adjacent to exactly one vertex in $D$. Since the $k$ paths in $T[D]$ are all contained in $P$, there are $2k$ vertices of $V\setminus D$ such that we can partite them into $k$ pairs with each pair corresponds to one of the $k$ paths in $T[D]$ and link the two ends of the path, respectively. Now, we have $k$ new paths. Since $T$ is a tree, the $k$ new paths are linked by the edges between different pairs and this needs $k-1$ edges. Hence every remaining vertices in $V\setminus D$(possibly none) is adjacent to exactly one vertex in $D$ and this implies that $T$ is a caterpillar.
\end{proof}

\section{Conclusion}
This paper successfully provides characterizations of the parameters for domination. These discoveries not only enrich the theoretical framework of graph theory but also provide new perspectives to consider the graph containing a dominating vertex and other parameters of the caterpillar.

\section*{Declaration}

\noindent$\textbf{Conflict~of~interest}$
The author declare that he has no known competing financial interests or personal relationships that could have appeared to influence the work reported in this paper.

\noindent$\textbf{Data~availability}$
Data sharing not applicable to this paper as no datasets were generated or analysed during the current study.


\begin{thebibliography}{99}
    \bibitem{BE}B. Bollobas and E.J. Cockayne, Graph theoretic parameters concerning domination,independence and irredundence. \emph{J. Graph Theory} \textbf{3} (1979), 241--250.

	\bibitem{Dela}E. DeLaVi$\widetilde{n}$a, Q. Liu. Pepper, B. Waller, D.B. West, Some conjectures of {G}raffiti.pc on total domination. \emph{Congr. Numer.} \textbf{185} (2007), 81--95.

    \bibitem{185}E.J. Cockayne, O. Favaron, C. Payan, A. Thomason, Contributions to the theory of domination, independence and irredundance in graphs. \emph{Discrete Math.} \textbf{33} (1981), 249--258.

    \bibitem{196}E.J. Cockayne, S.T. Hedetniemi, D.J. Miller, Properties of hereditary hypergraphs and middle graphs. \emph{Canad. Math. Bull.} \textbf{21} (1978), 461--468.
    
	\bibitem{Note} I. Sahul Hamid, S. Balamurugan and A. Navaneethakrishnan, A note on isolate domination. \emph{Electron. J. Graph Theory Appl. (EJGTA)} \textbf{4} (2016), 94--100.
	
	\bibitem{Chain} I. Sahul~Hamid, S. Balamurugan, Extended Chain of Domination Parameters in Graphs. \emph{ISRN Combinatorics}, volume 2013, Article ID 792743, 4pages.
	\bibitem{ID}
	I. Sahul~Hamid, S. Balamurugan, Isolate domination in graphs. \emph{Arab J. Math. Sci.} \textbf{22} (2016), 232--241.
	\bibitem{TSP} T.W. Haynes, S.T. Hedetniemi, P.J. Slater, Fundamentals of domination in Graphs. Marcel Dekker, New York(1998).
	

	
\end{thebibliography}
\end{document}